\newtheorem{theorem}{Teorema}[section]
\newtheorem{lemma}[theorem]{Lema}
\newtheorem{example}[theorem]{Ejemplo}
\newtheorem{proposition}[theorem]{Proposición}
\newtheorem{definition}[theorem]{Definición}
\newtheorem{remark}[theorem]{Observación}
\numberwithin{equation}{section}
\begin{document}
\setcounter{page}{1}
\renewcommand{\refname}{Referencias bibliográficas}



\begin{center}
\noindent{\textbf{Introducción a la teoría de complejidad topológica}}
\vspace{0.5cm}
\end{center}
\begin{center}
\textit{{\footnotesize 
\textbf{
Cesar A. Ipanaque Zapata \footnote[1]{USP, Instituto de Ci\^{e}ncias Matem\'{a}ticas e de Computa\c{c}\~{a}o, ICMC-USP. e-mail: cesarzapata@usp.br The first author would like to thank grant\#2016/18714-8, S\~{a}o Paulo Research Foundation (FAPESP) for financial support.} y
Rodolfo José Gálvez Pérez\footnote[2]{UNMSM, Facultad de Ciencias Matemáticas, e-mail:rgalvezp@unmsm.edu.pe } }}}
\end{center}

\begin{quote}
\textbf{Resumen:} 
En este trabajo revisaremos la noción de complejidad topológica, introducida por Michael Farber en el 2003. Usaremos esta teoría de complejidad topológica para resolver el problema de planificación de movimiento de un robot móvil que navega en el plano euclidiano evitando colisionar con un obstáculo. Específicamente, calculamos la complejidad topológica y diseñamos algoritmos explícitos. 

\noindent\textbf{Palabras clave: Complejidad topológica, Problema de planificación de movimiento, Algoritmos.}
\end{quote}

\vspace{5cm}
\begin{center}
\noindent{\textbf{Introduction to the topological complexity theory}}
\end{center}
\begin{quote}
\textbf{Abstract:}  In this work we will review the notion of topological complexity, introduced by Michael Farber in 2003. We will use this theory of topological complexity to solve the motion planning problem of a mobile robot that navigates in the Euclidean plane avoiding colliding with an obstacle. Specifically, we calculate topological complexity and design explicit algorithms.

\noindent\textbf{Keywords: Topological complexity, Motion planning problem, Algoritms.}
\end{quote}

\newpage
\normalsize
\section{Introducción}
El \textit{problema de planificación de movimiento} (vamos a decir simplemente MPP por sus siglas en ingles de \textit{Motion planning problem}) de robots consiste en encontrar o diseñar algoritmos que planifiquen por completo el movimiento de un sistema mecánico dado, i.e., dada una posición inicial y una posición final el algoritmo da una ruta para que el robot pueda seguir o navegar de forma autónoma desde su posición inicial hasta la posición deseada (final). Este es un problema clásico en el campo de la robótica, el lector puede ver las siguientes referencias \cite{latombe}, \cite{lavalle}, \cite{bajd}. Como ejemplo práctico para aplicar la teoría de complejidad topológica, vamos a considerar un sistema mecánico conformado por un robot móvil, que navega en el plano, y un obstáculo, como muestra la Figura~\ref{ejemplo}. 

\begin{figure}[!h]
 \caption{Sistema mecánico conformado por un robot móvil, que navega en el plano, y un obstáculo. Además, se muestra una ruta para que el robot navegue desde una posición inicial para una posición final sin chocar con el obstáculo.}
 \label{ejemplo}
\centering
 \includegraphics[scale=0.5]{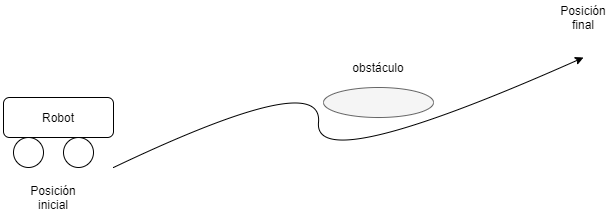}
\end{figure}

Michael Farber en el 2003, en \cite{farber}, presenta un enfoque topológico para el MPP. Sea $X$ el espacio de estados o configuraciones (libre de obstáculos) de un cierto sistema mecánico, el cual es un espacio topológico. 
Denotemos por $\text{P}X$ el espacio formado por todos los caminos continuos $\gamma:[0,1]\to X$ en $X$. $\text{P}X$ tiene la topología compacto-abierta.  Note que un camino en el espacio de configuraciones $X$ da un movimiento continuo de nuestro robot sin tocar los obstáculos, o sea, una navegación segura de nuestro robot. 

Una forma de planificar el movimiento de un sistema mecánico, y así solucionar el MPP, es encontrar un algoritmo $s$ que dé caminos en su espacio de configuraciones $X$, específicamente, dada una configuración inicial $A\in X$ y una configuración final $B\in X$, $s$ da un camino $s(A,B):[0,1]\to X$ tal que $s(A,B)(0)=A$ y $s(A,B)(1)=B$. Así $s$ da una ruta para que nuestro robot pueda navegar sin chocar con los obstáculos desde una posición inicial hasta una posición deseada (final). 

 Como $X$ es un espacio topológico entonces se puede hablar de cercanía. Si $A^\prime\in X$ está cerca de $A$ y $B^\prime\in X$ está cerca de $B$ se debe esperar que $s(A^\prime,B^\prime)$ este cerca de $s(A,B)$. En términos matemáticos se dice que $s$ dependa continuamente de las variables $A$ e $B$. En términos de robótica se dice que $s$ tenga estabilidad.
 
 Farber notó que un algoritmo de planificación de movimiento sobre $X$ es una sección de la aplicación $$e:\text{P}X\to X\times X, ~e(\gamma)=(\gamma(0),\gamma(1)).$$ En otras palabras, es una aplicación $s:X\times X\to PX$ no necesariamente continua tal que \[s(A,B)(0)=A \text{ y } s(A,B)(1)=B \text{ para todo } (A,B)\in X\times X.\] 

Para efectos de estabilidad lo conveniente es que los algoritmos sean continuos. Sin embargo, se puede mostrar que existe un algoritmo continuo sobre $X$ si, y solamente si, $X$ es contráctil. 

Por lo tanto, si $X$ no es contráctil entonces cualquier algoritmo sobre $X$ no es continuo. De esta manera, Farber define un invariante  numérico, llamado complejidad topológica $\text{TC}(X)$, que mide las discontinuidades de los algoritmos en $X$, o también, se dice que mide la complejidad del MPP en $X$.

En este trabajo vamos usar la teoría de complejidad topológica para solucionar el problema de planificación de movimiento del sistema mecánico dado en la Figura~\ref{ejemplo}, o sea, vamos a calcular su complejidad topológica y diseñar algoritmos óptimos. De esta manera, este trabajo espera  contribuir en dar una introducción a la teoría de complejidad topológica para una mayor cantidad de lectores.

\section{Complejidad topológica}
En esta sección daremos una revisión de la teoría de complejidad topológica. Específicamente responderemos las siguientes preguntas: ¿Qué es? ¿Cómo se usa? ¿Cuales son los problemas centrales? ¿Cómo se calcula? 

\subsection{¿Qué es?} Sea $X$ el espacio de estados o configuraciones (libre de obstáculo) de un cierto sistema mecánico. Denotemos por $\text{P}X$ el espacio formado por todos los caminos continuos $\gamma:[0,1]\to X$ en $X$. $\text{P}X$ es dotado de la topología compacto-abierta. Note que un camino en el espacio de configuraciones $X$ da un movimiento continuo de nuestro robot sin tocar los obstáculos, o sea, una navegación segura de nuestro robot. 

Una forma de planificar el movimiento de un sistema mecánico, y así solucionar el MPP, es encontrar un \textit{algoritmo}, digamos $s$, que dé  caminos en su espacio de configuraciones $X$, específicamente, dada una configuración inicial $A\in X$ y una configuración final $B\in X$, $s$ da un camino $s(A,B):[0,1]\to X$ tal que $s(A,B)(0)=A$ y $s(A,B)(1)=B$. Así $s$ da una ruta para que nuestro robot pueda navegar sin chocar con los obstáculos desde una posición inicial hasta una posición deseada (final). 

Note que un algoritmo en $X$ es una sección de la aplicación \begin{equation}
    e:\text{P}X\to X\times X, ~e(\gamma)=(\gamma(0),\gamma(1)).
\end{equation} En otras palabras, es una aplicación $s:X\times X\to PX$ no necesariamente continua tal que \begin{equation} s(A,B)(0)=A \text{ y } s(A,B)(1)=B \text{ para todo } (A,B)\in X\times X.\end{equation} En este caso diremos que $s$ es un \textit{algoritmo de planificación de movimiento} en $X$. Un algoritmo $s$ es continuo si la aplicación $s:X\times X\to\text{P}X$ es continua.

Recordemos que un espacio topológico $X$ es llamado \textit{contráctil} si existe una aplicación continua $H:X\times [0,1]\to X$ tal que $H_0=1_X$ e $H_1=\overline{x_0}$ para alguna constante $x_0\in X$. Donde $H_t$ denota la aplicación $H_t:X\to X$ dada por $H_t(x)=H(x,t)$, para cualquier $x\in X$,  $1_X:X\to X$ es la aplicación identidad y $\overline{x_0}$ denota la aplicación constante en $x_0$. 

Para efectos de estabilidad en los algoritmos es conveniente que los algoritmos sean continuos. Sin embargo, Farber muestra que existe un algoritmo continuo en $X$ si $X$ es contráctil. La demostración del Lema~\ref{contractil} es técnica y será usada para construir algoritmos sobre espacios contráctiles, así que presentamos una demostración conveniente para nuestros propósitos.  

\begin{lemma}\rm{\label{contractil}\cite[Theorem 1, pg. 212]{farber}
Sea $X$ un espacio topológico. Tenemos que existe un algoritmo continuo en $X$ si y solamente si $X$ es contráctil.}
\end{lemma}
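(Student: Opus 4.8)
The plan is to prove both directions of this equivalence. First, for the easier direction, I would assume $X$ is contractible and construct a continuous motion planning algorithm. Since $X$ is contractible, there exists a continuous homotopy $H:X\times[0,1]\to X$ with $H_0=1_X$ and $H_1=\overline{x_0}$ for some fixed basepoint $x_0\in X$. The idea is that given any pair $(A,B)\in X\times X$, I can route the robot from $A$ to the basepoint $x_0$ by following the path $t\mapsto H(A,t)$, and then from $x_0$ to $B$ by following the reverse of the path $t\mapsto H(B,t)$. Concatenating these two paths gives a path from $A$ to $B$. Explicitly I would define
\begin{equation}
s(A,B)(t)=\begin{cases} H(A,2t), & 0\le t\le \tfrac{1}{2},\\ H(B,2-2t), & \tfrac{1}{2}\le t\le 1.\end{cases}
\end{equation}
The two pieces agree at $t=\tfrac12$ since both equal $H_1=x_0$, so each $s(A,B)$ is a well-defined continuous path, and clearly $s(A,B)(0)=H(A,0)=A$ and $s(A,B)(1)=H(B,0)=B$, so $s$ is a section of $e$.

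The key step here is verifying that $s:X\times X\to \mathrm{P}X$ is continuous as a map into the path space with the compact-open topology. This is the main technical obstacle, and it is precisely the part the author flags as ``técnica.'' I would handle it via the exponential law (adjunction): a map into $\mathrm{P}X=X^{[0,1]}$ with the compact-open topology is continuous if and only if the adjoint map $X\times X\times[0,1]\to X$, $(A,B,t)\mapsto s(A,B)(t)$, is continuous, at least under mild hypotheses on $X$ (e.g. local compactness, or more safely phrasing it so the adjunction applies). The adjoint is built piecewise from $H$ composed with continuous reparametrizations, and the two pieces agree on the overlap $\{t=\tfrac12\}$, so by the pasting lemma the adjoint is continuous on $X\times X\times[0,1]$. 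I would need to be slightly careful about which form of the exponential law is invoked, since the clean biconditional requires $[0,1]$ to be locally compact Hausdorff (which it is), so that continuity of the adjoint transfers back to continuity of $s$.

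For the converse, I would assume there exists a continuous section $s:X\times X\to \mathrm{P}X$ of $e$ and deduce that $X$ is contractible. Fix any basepoint $x_0\in X$ and restrict attention to the slice $\{x_0\}\times X\subset X\times X$, obtaining a continuous map $\sigma:X\to\mathrm{P}X$, $\sigma(B)=s(x_0,B)$, whose paths start at $x_0$ and end at $B$. Passing through the adjunction again, the associated map $G:X\times[0,1]\to X$, $G(B,t)=\sigma(B)(t)=s(x_0,B)(t)$, is continuous, and it satisfies $G(B,0)=x_0$ for all $B$ and $G(B,1)=B$ for all $B$; that is, $G_1=1_X$ and $G_0=\overline{x_0}$. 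Reversing the time parameter (replacing $t$ by $1-t$) yields exactly a contraction of $X$ to the point $x_0$, proving $X$ is contractible.

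Overall the conceptual content is short — contractibility gives you a canonical way to steer every point to a common hub and back — and the real work is entirely in the topology of the path space. I expect the crux to be the careful application of the exponential law in both directions, making sure the hypotheses needed for the compact-open topology adjunction are satisfied; the path concatenation and the pasting-lemma continuity checks are routine once that framework is set up.
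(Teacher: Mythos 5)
Your proof is correct and follows essentially the same route as the paper: the same concatenation-through-$x_0$ construction for the direction ``contráctil $\Rightarrow$ algoritmo continuo'', and slicing the continuous section at a fixed endpoint for the converse (you use the slice $\{x_0\}\times X$ followed by a time reversal, where the paper uses $X\times\{x_0\}$ directly via $H(x,t)=s(x,x_0)(t)$ --- an immaterial difference). Your explicit justification of continuity via the exponential law for the compact-open topology, valid because $[0,1]$ is locally compact Hausdorff, is a point the paper simply asserts (``Como $H$ es continua, $s$ es continua''), so your write-up is if anything more careful.
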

\begin{proof}
$(\Rightarrow)$ Sea $s:X\times X\to \text{P}X$ un algoritmo continuo en $X$, o sea, $s$ es una aplicación continua que satisface las siguientes igualdades: $s(x_1,x_2)(0)=x_1$ y $s(x_1,x_2)(1)=x_2$, para cualquier $(x_1,x_2)\in X\times X$. Veamos que $X$ es contráctil. De hecho, fijemos un $x_0\in X$ y definamos la aplicación $H:X\times [0,1]\to X$ por \begin{equation*}
    H(x,t)=s(x,x_0)(t), \text{ para cualquier } (x,t)\in X\times [0,1].
\end{equation*} Como $s$ es continua, sigue que $H$ es continua. Además, para $x\in X$ obtenemos que \begin{eqnarray*}
H(x,0) &=& s(x,x_0)(0) \\ &=& x.
\end{eqnarray*} Similarmente, para $x\in X$ obtenemos que $H(x,1)=x_0$. Por lo tanto, $X$ es contráctil.

$(\Leftarrow)$ Sea $H:X\times [0,1]\to X$ una aplicación continua tal que $H_0=1_X$ e $H_1=\overline{x_0}$, para alguna constante $x_0\in X$. Veamos que existe un algoritmo continuo en $X$. De hecho, definamos la aplicación $s:X\times X\to \text{P}X$ dada por \[s(x_1,x_2)(t)=\begin{cases}
H(x_1,2t),& \hbox{ si $0\leq t\leq 1/2$,}\\
H(x_2,2-2t),& \hbox{ si $1/2\leq t\leq 1$,}
\end{cases} \text{ para cualquier $(x_1,x_2)\in X\times X$.}
\] Como $H$ es continua, $s$ es continua. Además, para $(x_1,x_2)\in X\times X$, tenemos que \begin{eqnarray*}
s(x_1,x_2)(0) &=& H(x_1,0)\\
&=& x_1,
\end{eqnarray*} similarmente, tenemos que $s(x_1,x_2)(1)=H(x_2,0)=x_2$. Por lo tanto, $s$ es un algoritmo continuo en $X$.
\end{proof}

Usaremos la demostración del Lema~\ref{contractil} para construir un algoritmo continuo en espacios estrellados. Recordemos que un subconjunto $K\subset\mathbb{R}^d$ es llamado \textit{estrellado} si existe un punto $x_0\in K$ tal que los puntos $(1-t)x+tx_0\in K$, para cualquier $x\in K$ y cualquier $t\in[0,1]$. En ese caso, $x_0$ es llamado la \textit{estrella} de $K$.

\begin{example}[Algoritmo continuo en espacios estrellados]
\rm{Sea $K$ un conjunto estrellado cuya estrella es $x_0\in K$. La aplicación $H:K\times [0,1]\to K$ dada por \[H(x,t)=(1-t)x+tx_0\] es una homotopia satisfaciendo $H_0=1_K$ y $H_1=\overline{x_0}$. Por lo tanto, $K$ es contráctil. Además, por la demostración del Lema~\ref{contractil} obtenemos que la aplicación $s:K\times K\to \text{P}K$ dada por \[s(x_1,x_2)(t)=\begin{cases}
(1-2t)x_1+2tx_0,& \hbox{ si $0\leq t\leq 1/2$,}\\
(2t-1)x_2+(2-2t)x_0,& \hbox{ si $1/2\leq t\leq 1$,}
\end{cases} \text{ para cualquier $(x_1,x_2)\in K\times K$,}
\] es un algoritmo continuo en $K$.}
\end{example}

El Lema~\ref{contractil} implica que si $X$ no es contráctil entonces cualquier algoritmo en $X$ no es continuo. De esta manera, Farber define un invariante  numérico, llamado complejidad topológica $\text{TC}(X)$, que mide las discontinuidades de los algoritmos en $X$, o también se dice que mide la complejidad del MPP de un sistema mecánico cuyo espacio de estados es $X$.

\begin{definition}\rm{\label{definicion-tc}\cite[Definition 2, pg. 213]{farber}
La \textit{complejidad topológica} de un espacio topológico $X$, denotado por $\text{TC}(X)$, es el menor entero positivo $m$ tal que el producto cartesiano $X\times X$ puede ser cubierto por $m$ subconjuntos abiertos $U_i$, \begin{equation*}
        X \times X = U_1 \cup U_2 \cup\cdots \cup U_m,  
    \end{equation*} tal que para cada $i = 1, 2,\ldots, m$, existe una aplicación continua $s_i:U_i\to\text{P}X$ tal que $s_i(x_1,x_2)(0)=x_1$ y $s_i(x_1,x_2)(1)=x_2$ para cualquier $(x_1,x_2)\in U_i$. En ese caso, $s_i$ es llamado un \textit{algoritmo local} continuo sobre $U_i$. Se tal $m$ no existe diremos que $\text{TC}(X)=\infty$.}  
\end{definition}

Note que $\text{TC}(X)=1$ si y solamente si existe un algoritmo continuo en $X$. Usando el Lema~\ref{contractil}, obtenemos que es posible planificar, mediante un algoritmo estable, el movimiento de un sistema mecánico si y solamente si su espacio de estados $X$ es contráctil. Así, $\text{TC}(X)=1$ si y solamente si $X$ es contráctil. Esto explica, el porqué en las aplicaciones industriales no es posible encontrar un algoritmo estable que planifique el movimiento, ya que los espacios de estados de sistemas mecánicos que mayormente aparecen en la industria no son contráctiles.

Notemos que, toda colección $s=\{s_i:U_i\to \text{P}X\}_{i=1}^k$ con cada $U_i\subset X\times X$ abierto, $X\times X=\bigcup_{i=1}^{k}U_i$ y $e\circ s_i=incl_{U_i}$, donde $incl_{U_i}:U_i\hookrightarrow X\times X$ denota la aplicación inclusión, define un algoritmo $s:X\times X\to\text{P}X$ en $X$. De hecho, para cada $(x_1,x_2)\in X\times X$ elegimos el menor $i\in \{1,\ldots,k\}$ tal que $(x_1,x_2)\in U_i$ y definimos el camino $s(x_1,x_2)$ por $s_i(x_1,x_2)$. Un tal algoritmo $s=\{s_i:U_i\to PX\}_{i=1}^k$ es llamado \textit{óptimo} se $k=\text{TC}(X)$.

Por tanto, la complejidad topológica $\text{TC}(X)$ es un invariante numérico que mide la complejidad del MPP para un sistema mecánico cuyo espacio de estados es $X$. En otras palabras, la complejidad topológica $\text{TC}(X)$ da la menor cantidad de algoritmos locales continuos que se necesitan para planificar el movimiento de un sistema mecánico cuyo espacio de estados es $X$.  

\subsection{¿Cómo se usa?} Consideremos un sistema mecánico cuyo espacio de estados es $X$. Si su complejidad topológica $\text{TC}(X)=k$ sabemos que  existe un algoritmo óptimo $s=\{s_i:U_i\to \text{P}X\}_{i=1}^k$ en $X$. Así, si damos un estado inicial $x_1$ y un estado final $x_2$, el algoritmo da una  ruta o un movimiento continuo $s(x_1,x_2)$, empezando en $x_1$ y terminando en $x_2$, para que el robot navegue de forma segura y autónoma. De esta manera, con la teoría de complejidad topológica, no únicamente planificamos el movimiento, sino también planificamos el movimiento de manera óptima, o sea, con la menor cantidad de algoritmos locales. En particular, el invariante $\text{TC}(X)$ da la menor cantidad de algoritmos locales que debe tener cualquier algoritmo en $X$. 

\subsection{¿Cuales son los problemas centrales?}
Calcular la complejidad topológica $\text{TC}$ es un problema difícil y propio de la topología algebraica. Mayormente, se desarrollan cotas inferiores y superiores que permitan estimar el valor de la TC (vea la Sección~\ref{como-calcula}). Sin embargo, esas técnicas no permiten construir o diseñar los algoritmos. Así que, podemos decir que los problemas centrales de la teoría de complejidad topológica son los siguientes:
    \begin{itemize}
        \item Calcular $\text{TC}(-)$.
        \item Encontrar algoritmos óptimos.
    \end{itemize}
 
 El problema de calcular la TC es muy estudiado en el área de la topología algebraica. Las herramientas principales usadas para estimar la TC son: Categoría de Lstuernik-Schnirelmann, teorías de cohomología, teoría de homotopía, complejos CW, fibraciones y cofibraciones, teoria de homotopía racional, etc. El lector puede consultar las siguientes referencias \cite{farber}, \cite{rudyak2010higher}, \cite{cohen2018}, \cite{cohen2020para}, \cite{cohen2020}, \cite{gonzalezhopf2019} y las referencias de ellos.
 
 El problema de encontrar o diseñar algoritmos óptimos es propio de la topología y geometría. Este es un problema poco estudiado y en general independiente del problema de calcular la TC. El lector puede consultar los trabajos \cite{zapataseq2020}, \cite{zapatamulti2020} y las referencias de ellos. 
 
 \begin{remark}
 \rm{Un problema extra y útil para fines prácticos seria la implementación, en algún programa computacional, de los algoritmos obtenidos vía la teoría de la complejidad topológica. Sin embargo, este problema aún no ha sido estudiado.}
 \end{remark}

\subsection{¿Cómo se calcula?}\label{como-calcula}  
Calcular la TC es un problema difícil y propio de la topología algebraica. Una propiedad muy útil es que la TC es un invariante homotópico. Presentamos la demostración de este hecho, ya que es muy útil y será usada en la construcción de algoritmos.

Recordemos que dos espacios topológicos $X$ y $Y$ \textit{tienen el mismo tipo de homotopía} si existen aplicaciones continuas $f:X\to Y$ y $g:Y\to X$ tales que $f\circ g\simeq 1_Y$ y $g\circ f\simeq 1_X$.

\begin{proposition}\label{homotopy-invariant}
\rm{Si existen aplicaciones continuas $Y\stackrel{g}{\to}X\stackrel{f}{\to} Y$ tales que $f\circ g\simeq 1_Y$, entonces sus complejidades topológicas satisfacen la siguiente desigualdad:  \[\text{TC}(X)\geq\text{TC}(Y).\] En particular, si $X$ y $Y$ tienen el mismo tipo de homotopía, entonces $\text{TC}(X)=\text{TC}(Y)$.}
\end{proposition}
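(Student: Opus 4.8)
El plan es demostrar la desigualdad $\text{TC}(Y)\leq\text{TC}(X)$; la afirmación ``en particular'' se obtendrá luego por simetría. Supongamos entonces que $\text{TC}(X)=k$ y fijemos un cubrimiento abierto óptimo $X\times X=U_1\cup\cdots\cup U_k$ junto con algoritmos locales continuos $s_i\colon U_i\to\text{P}X$ (es decir, $s_i(x_1,x_2)(0)=x_1$ y $s_i(x_1,x_2)(1)=x_2$). La idea es transportar este cubrimiento a $Y\times Y$ usando $g$ para ``entrar'' en $X$ y $f$ para ``regresar'' a $Y$. Primero definiría $V_i=(g\times g)^{-1}(U_i)\subseteq Y\times Y$; como $g\times g$ es continua y cada $U_i$ es abierto, cada $V_i$ es abierto, y además $\bigcup_{i=1}^k V_i=(g\times g)^{-1}(X\times X)=Y\times Y$. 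Así obtenemos un cubrimiento abierto de $Y\times Y$ con exactamente $k$ piezas, que es la cantidad correcta.

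\textbf{Construcción de las secciones locales.} El punto delicado es que, para $(y_1,y_2)\in V_i$, el camino $f\circ s_i(g(y_1),g(y_2))$ en $Y$ no va de $y_1$ a $y_2$, sino de $f(g(y_1))$ a $f(g(y_2))$. Aquí es donde entra la homotopía $H\colon Y\times[0,1]\to Y$ con $H_0=f\circ g$ y $H_1=1_Y$: la usaría para corregir ambos extremos, concatenando tres caminos (primero de $y_1$ a $f(g(y_1))$ recorriendo $H$ al revés, luego el camino $f\circ s_i$, y finalmente de $f(g(y_2))$ a $y_2$ recorriendo $H$). Explícitamente, definiría $\sigma_i\colon V_i\to\text{P}Y$ por
\begin{equation*}
\sigma_i(y_1,y_2)(t)=\begin{cases}
H(y_1,1-3t), & 0\leq t\leq 1/3,\\
f\big(s_i(g(y_1),g(y_2))(3t-1)\big), & 1/3\leq t\leq 2/3,\\
H(y_2,3t-2), & 2/3\leq t\leq 1.
\end{cases}
\end{equation*}
Verificaría que las tres ramas empalman en $t=1/3$ y $t=2/3$ (ambas valen $f(g(y_1))$ y $f(g(y_2))$ respectivamente, gracias a $H_0=f\circ g$ y a las condiciones de frontera de $s_i$) y que $\sigma_i(y_1,y_2)(0)=H(y_1,1)=y_1$, $\sigma_i(y_1,y_2)(1)=H(y_2,1)=y_2$, de modo que $\sigma_i$ es una sección de $e_Y$ sobre $V_i$.

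\textbf{Dificultad principal y conclusión.} La parte que exige cuidado es la continuidad de $\sigma_i$ como aplicación $V_i\to\text{P}Y$ con la topología compacto-abierta: hay que argumentar que la concatenación definida por ramas es continua, usando la continuidad de $H$, $f$, $g$ y $s_i$ y el hecho de que la operación de concatenar caminos es continua en dicha topología (esencialmente el mismo tipo de pegado por el lema del empalme que ya aparece en la prueba del Lema~\ref{contractil}). Una vez establecido esto, los abiertos $V_1,\ldots,V_k$ cubren $Y\times Y$ y cada uno admite un algoritmo local continuo, luego $\text{TC}(Y)\leq k=\text{TC}(X)$. Finalmente, si $X$ e $Y$ tienen el mismo tipo de homotopía existen $f$ y $g$ con $f\circ g\simeq 1_Y$ y $g\circ f\simeq 1_X$; aplicando lo anterior a cada composición se obtienen $\text{TC}(X)\geq\text{TC}(Y)$ y $\text{TC}(Y)\geq\text{TC}(X)$, de donde $\text{TC}(X)=\text{TC}(Y)$.
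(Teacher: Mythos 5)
Tu propuesta es correcta y sigue esencialmente el mismo camino que la prueba del artículo: retroceder el cubrimiento óptimo de $X\times X$ mediante $(g\times g)^{-1}$ y corregir los extremos de los caminos $f\circ s_i(g(y_1),g(y_2))$ concatenando con la homotopía en tres tramos, exactamente como en la fórmula~(\ref{induz}) del texto. La única diferencia es cosmética: usas la convención $H_0=f\circ g$, $H_1=1_Y$ en lugar de $H_0=1_Y$, $H_1=f\circ g$, lo cual solo invierte el parámetro de la homotopía en los tramos extremos.
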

\begin{proof}
Sea $H:Y\times [0,1]\to Y$ una homotopía tal que $H_0=1_Y$ y $H_1=f\circ g$. Note que, todo algoritmo local $s:U\to \text{P}X$ definido sobre $U\subset X\times X$ induce un algoritmo local $\widetilde{s}:V\to\text{P}Y$, definido sobre $V=(g\times g)^{-1}(U)\subset Y\times Y$, dado por:
\begin{equation}\label{induz}
   \widetilde{s}(y_1,y_2)(t) = \begin{cases}
H_{3t}(y_1),& \hbox{ si $0\leq t\leq 1/3$;}\\
f\left(s(g(y_1),g(y_2))(3t-1)\right),& \hbox{ si $1/3\leq t\leq 2/3$;}\\
H_{3-3t}(y_2),& \hbox{ si $2/3\leq t\leq 1$.}
\end{cases} 
\end{equation}
Por lo tanto, si $s=\{s_i:U_i\to \text{P}X\}_{i=1}^k$ es un algoritmo óptimo en $X$, entonces aplicando~(\ref{induz}) a cada $s_i$, podemos construir un algoritmo (no necesariamente óptimo) $\widetilde{s}=\{\widetilde{s_i}:V_i\to \text{P}Y\}_{i=1}^k$ en $Y$. Así, obtenemos que $\text{TC}(X)=k\geq\text{TC}(Y)$.  

En particular, si $X$ y $Y$ tienen el mismo tipo de homotopía, podemos obtener de forma similar la otra desigualdad y así $\text{TC}(X)=\text{TC}(Y)$. Note que, en este caso el algoritmo $\widetilde{s}$ es óptimo.
\end{proof}

Para calcular la TC, en general se intenta estimar, o sea, encontrar cotas inferiores y superiores para la TC. Las cotas inferiores mayormente dependen de la información algebraica del espacio, por ejemplo, de su anillo de cohomología singular $H^\ast(X)$ ya sea con coeficientes enteros $\mathbb{Z}$ o sobre un cuerpo $\mathbb{K}$. Las cotas superiores aparecen de la información homotópica del espacio, por ejemplo, de su dimensión homotópica y de su grado de conexidad. Continuación enunciaremos las cotas más usadas para estimar la TC.

Recordemos primero, de \cite{cornea2003}, que la \textit{categoría de Lusternik-Schnirelmann} de un espacio topológico $X$, denotado por $\text{cat}(X)$, es el menor entero positivo $k$ tal que $X$ puede ser cubierto por subconjuntos abiertos $U_1,\ldots,U_k$, o sea, $X=\bigcup_{i=1}^k U_i$, tales que cada inclusión $U_i\hookrightarrow X$ sea homotópica a una aplicación constante. Note que, $\text{cat}(X)=1$ si y solamente si $X$ es contráctil.

Para un anillo $A$ y $S\subset A$ un subconjunto, el \textit{índice de nilpotencia} de $S$ en $A$ es definido por: \[\text{Nil}(S)=\min\{k:~\text{ cualquier producto de $k$ elementos de $S$ es nulo}\}.\]

Para un complejo CW $X$, su \textit{dimensión homotópica} es dado por \[\text{hdim}(X)=\min\{\text{dim}(Y):~Y \text{ es un complejo CW y } Y\simeq X\}.\] Un espacio topológico $X$ es \textit{$q$-conexo} si $\pi_i(X)=0$ para cualquier $0\leq i\leq q$.

\begin{theorem}\label{teo} \rm{
\begin{enumerate}
    \item[1)] Para cualquier espacio conexo por caminos $X$, se tiene:
\[\text{cat}(X)\leq \text{TC}(X)\leq\text{cat}(X\times X).\]
\item[2)] Si $X$ y $Y$ son complejos CW, entonces \begin{eqnarray*}
\text{cat}(X\times Y) &\leq& \text{cat}(X)+\text{cat}(Y)-1,\\
\text{TC}(X\times Y) &\leq& \text{TC}(X)+\text{TC}(Y)-1.
\end{eqnarray*}
\item[3)] Si $X$ es un complejo CW y $q$-conexo, entonces \begin{eqnarray}
\nonumber \text{cat}(X) &\leq& \dfrac{\text{hdim}(X)}{q+1}+1,\\
\label{tc-hdim}\text{TC}(X) &\leq& \dfrac{2\text{hdim}(X)}{q+1}+1.
\end{eqnarray} 
\item[4)] Si $h^\ast$ es cualquier teoría de cohomología multiplicativa sobre los pares de espacios topológicos, entonces
\begin{eqnarray}
\label{conta-inferior} \text{Nil}\left(\text{Ker}(\Delta^\ast:h^\ast(X\times X)\to h^\ast(X)) \right) &\leq& \text{TC}(X).
\end{eqnarray}
 Donde $\Delta:X\to X\times X,~\Delta(x)=(x,x)$ es la aplicación diagonal.
\end{enumerate}}
\end{theorem}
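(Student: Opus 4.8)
The four estimates are the classical bounds collected by Farber, and the plan is to prove them in the stated order, since item~3) will reuse item~1). The organizing remark is that $\text{TC}(X)$ is the genus of the evaluation fibration $e\colon\text{P}X\to X\times X$ and $\text{cat}(X)$ the genus of the based path fibration, so each inequality is an instance of a genus estimate; I will, however, argue directly to stay within the language already set up.

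For item~1), I would get the lower bound $\text{cat}(X)\leq\text{TC}(X)$ by slicing an optimal cover. Starting from open sets $U_1,\dots,U_m$ covering $X\times X$ with continuous local algorithms $s_i\colon U_i\to\text{P}X$ and $m=\text{TC}(X)$, fix $x_0\in X$ and set $V_i=\{x\in X:(x,x_0)\in U_i\}$. These are open and cover $X$, and $(x,t)\mapsto s_i(x,x_0)(t)$ is a homotopy on $V_i\times[0,1]$ from the inclusion $V_i\hookrightarrow X$ to the constant map $\overline{x_0}$, so each $V_i$ is categorical and $\text{cat}(X)\leq m$. For the upper bound $\text{TC}(X)\leq\text{cat}(X\times X)$ I would run the construction of Lemma~\ref{contractil} locally: given a categorical cover $W_1,\dots,W_n$ with null-homotopies $G_j=(G_j^1,G_j^2)$ contracting $W_j$ to a point $(a_j,b_j)$, and a path $\alpha_j$ from $a_j$ to $b_j$ (here path-connectedness of $X$ is essential), the concatenation of $G_j^1(x_1,x_2,-)$, then $\alpha_j$, then the reverse of $G_j^2(x_1,x_2,-)$ is a continuous section of $e$ over $W_j$ joining $x_1$ to $x_2$. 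Hence $\text{TC}(X)\leq n=\text{cat}(X\times X)$.

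Items~2) and~3) are the subadditivity and dimension--connectivity bounds, and this is where I expect the real work. For the product inequality for $\text{cat}$, categorical covers $\{U_i\}$ of $X$ and $\{V_j\}$ of $Y$ give the categorical cover $\{U_i\times V_j\}$ of $X\times Y$, but with $\text{cat}(X)\,\text{cat}(Y)$ members; the sharp value $\text{cat}(X)+\text{cat}(Y)-1$ requires grouping these products along the anti-diagonals $i+j=\text{const}$ and fusing each group into a single categorical open set by a partition-of-unity/general-position argument valid on CW complexes. This combinatorial refinement is the main obstacle of the whole theorem. The identical scheme, applied to $e$ and to the product fibration $e_X\times e_Y$ (into which $e_{X\times Y}$ factors after reordering coordinates), yields the $\text{TC}$ product inequality. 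For item~3) I would first establish the $\text{cat}$ estimate, namely that a $q$-connected CW complex of homotopy dimension $n$ admits a categorical cover of size $\lfloor n/(q+1)\rfloor+1$; this refines the crude bound $\text{cat}(X)\leq\text{hdim}(X)+1$ by a skeletal induction in which $q$-connectivity lets one pack cells spanning a whole range of dimensions into one categorical piece. The $\text{TC}$ bound is then immediate: $X\times X$ is again $q$-connected with $\text{hdim}(X\times X)\leq 2\,\text{hdim}(X)$, so applying the $\text{cat}$ estimate to $X\times X$ and then item~1) gives $\text{TC}(X)\leq\text{cat}(X\times X)\leq\frac{2\,\text{hdim}(X)}{q+1}+1$.

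For item~4) I would use the relative cup-product argument. The inclusion of constant paths $X\hookrightarrow\text{P}X$ is a homotopy equivalence carrying $e$ to $\Delta$, so $\ker(e^\ast)=\ker(\Delta^\ast)$ and its elements are exactly the zero-divisors. Let $m=\text{TC}(X)$ with optimal cover $U_1,\dots,U_m$ and sections $s_i$, and take any $u_1,\dots,u_m\in\ker(\Delta^\ast)$. Since $e\circ s_i$ is the inclusion $U_i\hookrightarrow X\times X$, the restriction of each $u_j$ to $U_i$ equals $s_i^\ast e^\ast u_j=0$; in particular $u_j$ vanishes on $U_j$, hence lifts to a relative class $\bar u_j\in h^\ast(X\times X,U_j)$. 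The relative cup product sends $\bar u_1\cdots\bar u_m$ into $h^\ast\!\left(X\times X,\,U_1\cup\cdots\cup U_m\right)=h^\ast(X\times X,X\times X)=0$, whose image in $h^\ast(X\times X)$ is $u_1\cdots u_m$; therefore $u_1\cdots u_m=0$. Thus every product of $m$ zero-divisors vanishes, which is exactly the assertion $\text{Nil}\!\left(\ker(\Delta^\ast)\right)\leq m=\text{TC}(X)$.
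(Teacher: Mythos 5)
A point of comparison you could not have known: the paper does not prove Theorem~\ref{teo} at all. It states the four bounds as a compendium of classical results, delegating the proofs to the cited literature (\cite{farber}, \cite{cornea2003}), so your proposal has to be measured against the standard proofs rather than against anything in the text.

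Against that standard, your items 1) and 4) are complete and correct, and they are exactly the classical arguments: slicing an optimal cover of $X\times X$ along $X\times\{x_0\}$ gives $\text{cat}(X)\leq \text{TC}(X)$; running the contraction-plus-connecting-path construction of Lemma~\ref{contractil} locally over a categorical cover of $X\times X$ (this is where path-connectedness enters, exactly as you say) gives $\text{TC}(X)\leq\text{cat}(X\times X)$; and in 4), the identification $\text{Ker}(e^\ast)=\text{Ker}(\Delta^\ast)$ via the constant-path homotopy equivalence, the vanishing of $u_j$ on $U_j$ because $\mathrm{incl}_{U_j}^\ast u_j = s_j^\ast e^\ast u_j = 0$, and the relative cup product landing in $h^\ast(X\times X, X\times X)=0$ constitute the standard Schwarz--Farber zero-divisor argument. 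Your reduction of the $\text{TC}$ half of item 3) is also correct and clean: $X\times X$ is again $q$-conexo with $\text{hdim}(X\times X)\leq 2\,\text{hdim}(X)$, so the bound follows from item 1) plus the $\text{cat}$ estimate.

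The genuine gap is that the two statements carrying the real technical weight --- $\text{cat}(X\times Y)\leq\text{cat}(X)+\text{cat}(Y)-1$ in item 2) and $\text{cat}(X)\leq \text{hdim}(X)/(q+1)+1$ in item 3) --- remain plans rather than proofs, and every other unproved claim in your write-up (both $\text{TC}$ inequalities of 2) and 3)) is reduced to them. You correctly identify the anti-diagonal grouping $\bigcup_{i+j=k}U_i\times V_j$, but the step you describe as ``fusing each group into a single categorical open set'' is precisely the hard part: a union of categorical open sets is categorical essentially only when the pieces are disjoint (so that the null-homotopies assemble, using path-connectedness to bring the constants together), and one must first shrink and disjointify the sets within each group, which is where normality (or the ANR property) of CW complexes is actually used. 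Without that lemma the count $\text{cat}(X)+\text{cat}(Y)-1$ does not follow --- the naive product cover gives only $\text{cat}(X)\cdot\text{cat}(Y)$. Likewise, the skeletal induction behind the dimension--connectivity bound for $\text{cat}$ needs an obstruction-theoretic argument (packing the cells in dimensions $k(q+1)+1,\ldots,(k+1)(q+1)$ into a single categorical piece), for which you give no details. Both are real theorems with nontrivial proofs (see \cite{cornea2003}); as written, your proposal fully proves 1) and 4), correctly reduces 2) and 3) to their $\text{cat}$ counterparts, and leaves those counterparts open.
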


\begin{example}[Complejidad topológica de las esferas impares]\label{spheres-odd}
\rm{Consideremos la esfera $(2m-1)$-dimensional $S^{2m-1}=\{x\in\mathbb{R}^{2m}:~\parallel x\parallel=1\}$, con $m\geq 1$, donde $\parallel \cdot\parallel$ denota la norma euclidiana. Veamos que su complejidad topológica es igual a $2$, o sea, \[\text{TC}(S^{2m-1})=2, \text{ para cualquier $m\geq 1$}.\] De hecho, como $S^{2m-1}$ no es contráctil, entonces tenemos que $\text{TC}(S^{2m-1})\geq 2$. Así, basta mostrar que $\text{TC}(S^{2m-1})\leq 2$. Para ello, vamos construir un algoritmo $s=\{s_i:U_i\to \text{P}S^{2m-1}\}_{i=1}^{2}$ en $S^{2m-1}$ de la siguiente manera:
\begin{itemize}
    \item Consideremos un campo vectorial tangente sobre la esfera $S^{2m-1}$, por ejemplo, el campo $\nu:S^{2m-1}\to S^{2m-1}$ dado por $\nu(x_1,y_1,\ldots,x_m,y_m)=(-y_1,x_1,\ldots,-y_m,x_m)$.
    \item Los abiertos $U_i$ son dados por:
    \begin{eqnarray*}
    U_1 &=& \{(a,b)\in S^{2m-1}\times S^{2m-1}:~a\neq -b\},\\
     U_2 &=& \{(a,b)\in S^{2m-1}\times S^{2m-1}:~a\neq b\}.
    \end{eqnarray*} Note que, $U_1\cup U_2=S^{2m-1}\times S^{2m-1}$.
    \item El algoritmo local $s_1:U_1\to \text{P}S^{2m-1}$ es dado por: $$ 
s_1(a,b)(t) = \dfrac{(1-t)a+tb}{\parallel (1-t)a+tb \parallel}, \text{ para todo } (a,b)\in U_1 \text{ y } t\in[0,1].
$$ Note que, fijados $(a,b)\in U_1$, o sea, $a$ y $b$ no son antípodas, el camino $s_1(a,b)(-)$ es la geodésica más corta en la esfera $S^{2m-1}$ que conecta $a$ y $b$.
\item Antes de definir el segundo algoritmo local $s_2$ consideremos el subconjunto $F=\{(a,b)\in S^{2m-1}\times S^{2m-1}: ~~a= -b\} $ y para cualquier  $(a,b)\in F$ definamos 
$$
\alpha(a,b)(t) =  \begin{cases} 
s_1(a,v(a))(2t), &\hbox{ si $0\leq t\leq 1/2$;}\\
s_1(v(a),b)(2t-1), &\hbox{ si $1/2\leq t\leq 1$.}
\end{cases} $$ Note que, $\nu(a)$ no es antípoda de $a$ ni de $b$, pues $\nu(a)$ es ortogonal a $a$ y a $b=-a$.
\item Ahora, definamos $s_2:U_2\to \text{P}S^{2m-1}$. Para cualquier,  $(a,b)\in U_2$, sea
$$ 
s_2(a,b)(t) =  \begin{cases} 
s_1(a,-b)(2t), &\hbox{ si $0\leq t\leq 1/2$;}\\
\alpha(-b,b)(2t-1), &\hbox{ si $1/2\leq t\leq 1$.}
\end{cases} $$ Note que $-b$ no es antípoda de $a$, pues $a\neq b$.
\end{itemize}
  } Así, tenemos que $\text{TC}(S^{2m-1})=2$. Además, tal algoritmo $s=\{s_i:U_i\to \text{P}S^{2m-1}\}_{i=1}^{2}$ es óptimo. \flushright$\square$
\end{example}

\begin{remark}
 \rm{Note que, usando la desigualdad~(\ref{tc-hdim}) del Teorema~\ref{teo} obtenemos la siguiente cota superior: $\text{TC}(S^{d})\leq 3$, para cualquier $d\geq 1$, pues $\text{hdim}(S^{d})=d$ y la esfera $S^d$ es $(d-1)$-conexa. Queremos resaltar que para calcular la TC de la esfera $S^{2m-1}$ hemos construido explícitamente un algoritmo y así hemos obtenido una mejor cota superior,  $\text{TC}(S^{2m-1})\leq 2$. 
 
 Por otro lado, usando el anillo de cohomología singular $H^\ast(S^d;\mathbb{Z})=\dfrac{\mathbb{Z}[\alpha]}{\langle \alpha^2\rangle}$ en la desigualdad~(\ref{conta-inferior}) del Teorema~\ref{teo} se puede obtener que $3\leq \text{TC}(S^{d})$, para $d$ par. Así,  $\text{TC}(S^{d})=3$, para cualquier $d$ par.}
\end{remark}


\section{Aplicación}
En esta sección vamos usar la teoría de complejidad topológica para solucionar el problema de planificación de movimiento para el sistema mecánico dado en la Figura~\ref{ejemplo}, o sea, vamos calcular la complejidad topológica y diseñar algoritmos.

Consideremos que nuestro robot móvil tiene un radio $l_R>0$ y que el obstáculo tiene un radio $l_O>0$. Además, consideremos que nuestro sistema de referencia $\mathbb{R}^2$ tiene su origen de coordenadas en el centro del obstáculo (vea la Figura~\ref{path-detal}). Para encontrar el espacio de estados (libre de obstáculos) de nuestro sistema mecánico, escojamos un punto de nuestro robot, por ejemplo, su centro de gravedad proyectado verticalmente sobre el plano y denotemos por $p_R$  (como muestra la Figura~\ref{path-detal}). Note que, si $p_R\in\mathbb{R}^2$ es tal que $\parallel p_R\parallel>l_O+l_R$ entonces para tal $p_R$ nuestro robot está en una posición sin tocar el obstáculo (vea la Figura~\ref{path-detal}). 

\begin{figure}[!h]
 \caption{Sistema mecánico conformado por un robot móvil de radio $l_R>0$, que navega en el plano $\mathbb{R}^2$, y un obstáculo de radio $l_O>0$. Si $p_R\in\mathbb{R}^2$ es tal que $\parallel p_R\parallel>l_O+l_R$ entonces para tal $p_R$ nuestro robot está en una posición sin tocar el obstáculo.}
 \label{path-detal}
\centering
 \includegraphics[scale=0.5]{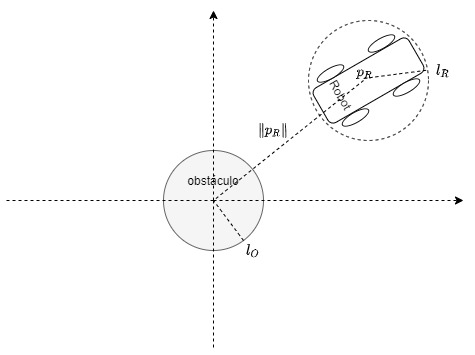}
\end{figure}

Así, tal $p_R$ determina un estado (libre de obstáculos) de nuestro robot, o sea, el espacio de estados $X$ asociado a nuestro sistema mecánico (vea la Figura~\ref{state-space}) está dado por: \[X=\{p_R\in \mathbb{R}^2:~\parallel p_R\parallel>l_O+l_R\}.\] 

\begin{figure}[!h]
 \caption{El espacio de estados $X$.}
 \label{state-space}
\centering
 \includegraphics[scale=0.5]{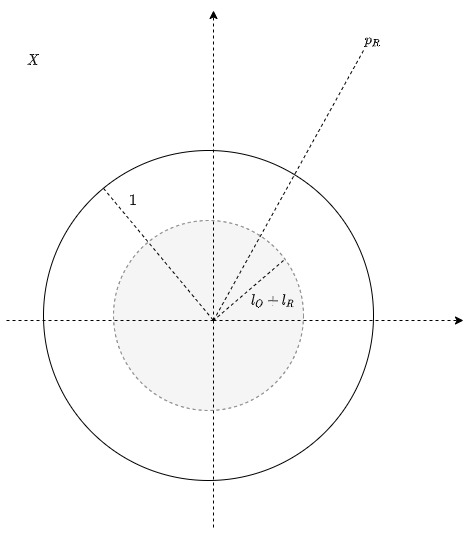}
\end{figure}

Sin perdida de generalidad, podemos suponer que $l_O+l_R<1$ (como muestra la Figura~\ref{state-space}). Luego, podemos mostrar que $X$ \textit{se retrae por deformación sobre} $S^1$, o sea, existe una homotopía $H:X\times [0,1]\to X$ satisfaciendo $H_0=1_X$, $H_1(X)\subset S^1$ y $H_1(z)=z$, para cualquier $z\in S^1$. Considerando $r=H_1:X\to S^1$ y $i:S^1\hookrightarrow X$ la aplicación inclusión, tenemos que $r\circ i=1_{S^1}$ y $i\circ r\simeq 1_X$. En particular, $X$ tiene el mismo tipo de homotopía que la $1$-esfera $S^1$. De hecho, basta definir \begin{equation}
    \label{homotopia}  H(p_R,t)=(1-t)p_R+t\dfrac{p_R}{\parallel p_R\parallel},
\end{equation} para cualquier $(p_R,t)\in X\times [0,1]$.

Así, obtenemos la complejidad topológica para $X$.

\begin{proposition}
\rm{La complejidad topológica para $X$ es dada por:\[\text{TC}(X)=2.\]}
\end{proposition}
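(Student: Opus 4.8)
The plan is to exploit the fact, established just above the statement, that $X$ has the same homotopy type as the circle $S^1$. Indeed, the deformation $H$ given in~(\ref{homotopia}) exhibits $S^1$ as a deformation retract of $X$: writing $r=H_1:X\to S^1$ for the radial map and $i:S^1\hookrightarrow X$ for the inclusion, one has $r\circ i=1_{S^1}$ and $i\circ r\simeq 1_X$, so $X$ and $S^1$ have the same homotopy type. My strategy is to pass the computation along this equivalence rather than to attack $\text{TC}(X)$ directly.

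First I would invoke Proposition~\ref{homotopy-invariant}, which asserts that $\text{TC}$ is a homotopy invariant, to conclude that $\text{TC}(X)=\text{TC}(S^1)$. Next I would identify $S^1$ with the odd-dimensional sphere $S^{2m-1}$ for $m=1$ and apply Example~\ref{spheres-odd}, which shows $\text{TC}(S^{2m-1})=2$ for every $m\geq 1$; in particular $\text{TC}(S^1)=2$. Combining these two equalities gives $\text{TC}(X)=2$, as claimed.

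The argument is essentially immediate once the homotopy equivalence $X\simeq S^1$ is in hand, so there is no serious obstacle. The only point requiring care is the verification that $H$ in~(\ref{homotopia}) genuinely defines a deformation retraction onto $S^1$: that $H$ is well-defined and continuous (the denominator $\parallel p_R\parallel$ never vanishes, since $\parallel p_R\parallel>l_O+l_R>0$ on $X$), that $H_0=1_X$, that $H_1$ takes values in $S^1$, and that $H_1$ fixes $S^1$ pointwise. All of these are routine checks.

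An alternative, more self-contained route would bypass the appeal to Example~\ref{spheres-odd} altogether: one would note that $X$ is not contractible, whence $\text{TC}(X)\geq 2$ by the Lema~\ref{contractil} discussion, and then transport the explicit two-chart algorithm on $S^1$ back to $X$ via the induced-algorithm construction~(\ref{induz}) from the proof of Proposition~\ref{homotopy-invariant}, obtaining $\text{TC}(X)\leq 2$. This would simultaneously exhibit an optimal motion planner on $X$, which fits the paper's stated goal of designing explicit algorithms; however, for the purpose of merely computing the number $\text{TC}(X)$, the homotopy-invariance route is the cleaner one and is what I would present.
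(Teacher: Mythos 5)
Your proposal is correct and follows exactly the paper's own argument: invoke the homotopy equivalence $X\simeq S^1$ together with Proposición~\ref{homotopy-invariant} to get $\text{TC}(X)=\text{TC}(S^1)$, then apply Ejemplo~\ref{spheres-odd} (the case $m=1$) to conclude $\text{TC}(X)=2$. The additional checks on $H$ and the alternative route you sketch are consistent with, and indeed anticipate, what the paper does in its subsequent proposition on explicit optimal algorithms.
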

\begin{proof}
Como $X$ y $S^1$ tienen el mismo tipo de homotopía, por la Proposición~\ref{homotopy-invariant}, sigue que $\text{TC}(X)=\text{TC}(S^1)$. Usando el Ejemplo~\ref{spheres-odd}, concluimos que $\text{TC}(X)=2.$
\end{proof}

Por otro lado, como la homotopía $H$ dada en $(\ref{homotopia})$ es explicita, entonces, usando la demostración de la Proposición~\ref{homotopy-invariant}, tenemos que el algoritmo óptimo $s$ en la esfera $S^1$, dado en el Ejemplo~\ref{spheres-odd}, induce un algoritmo óptimo en $X$.

\begin{proposition}
\rm{Existe un algoritmo óptimo explícito en $X$.}
\end{proposition}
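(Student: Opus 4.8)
La estrategia es aplicar directamente la maquinaria de la demostración de la Proposición~\ref{homotopy-invariant} a los datos explícitos de que ya disponemos. Contamos con la homotopía explícita $H$ dada en~(\ref{homotopia}), que verifica $H_0=1_X$ y $H_1=i\circ r$, donde $r:X\to S^1$ es la retracción $r(p)=p/\parallel p\parallel$ e $i:S^1\hookrightarrow X$ es la inclusión. Eligiendo en la Proposición~\ref{homotopy-invariant} los papeles $g=r$ y $f=i$, de modo que $f\circ g=i\circ r\simeq 1_X$ por medio de $H$, la fórmula~(\ref{induz}) convierte cualquier algoritmo local $s:U\to\text{P}S^1$ sobre $U\subset S^1\times S^1$ en un algoritmo local $\widetilde{s}:V\to\text{P}X$ sobre $V=(r\times r)^{-1}(U)$.

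Concretamente, partiría del algoritmo óptimo $s=\{s_i:U_i\to\text{P}S^1\}_{i=1}^{2}$ construido en el Ejemplo~\ref{spheres-odd} para $m=1$, con $U_1=\{(a,b):a\neq -b\}$ y $U_2=\{(a,b):a\neq b\}$, y aplicaría la fórmula~(\ref{induz}) a cada $s_i$. Sustituyendo las expresiones explícitas de $H$, $r$ e $i$, el camino $\widetilde{s_i}(p,q)$ resulta dado en tres etapas: para $0\leq t\leq 1/3$ retrae radialmente $p$ hacia la circunferencia según $(1-3t)p+3t\,\dfrac{p}{\parallel p\parallel}$; para $1/3\leq t\leq 2/3$ recorre sobre $S^1$ el camino $s_i\!\left(\dfrac{p}{\parallel p\parallel},\dfrac{q}{\parallel q\parallel}\right)(3t-1)$; y para $2/3\leq t\leq 1$ empuja radialmente hacia $q$ según $(3t-2)q+(3-3t)\,\dfrac{q}{\parallel q\parallel}$. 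De este modo obtengo $\widetilde{s_i}:V_i\to\text{P}X$ sobre $V_i=(r\times r)^{-1}(U_i)$.

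Para cerrar la demostración restaría comprobar que $\{\widetilde{s_i}:V_i\to\text{P}X\}_{i=1}^{2}$ es un algoritmo óptimo. La continuidad de cada $\widetilde{s_i}$ se sigue de que las tres piezas son continuas y coinciden en los valores de empalme $t=1/3$ y $t=2/3$. El cubrimiento $V_1\cup V_2=X\times X$ resulta de $U_1\cup U_2=S^1\times S^1$ y de la sobreyectividad de $r\times r$, ya que $V_1\cup V_2=(r\times r)^{-1}(U_1\cup U_2)=(r\times r)^{-1}(S^1\times S^1)=X\times X$. Como $\text{TC}(X)=\text{TC}(S^1)=2$ y exhibimos exactamente dos algoritmos locales, el algoritmo es óptimo.

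Estrictamente, no anticipo un obstáculo conceptual: la Proposición~\ref{homotopy-invariant} ya garantiza en abstracto que la colección inducida es un algoritmo óptimo, de manera que todo el trabajo se reduce a una sustitución. El único punto que requiere cuidado, y que señalaría como la parte más delicada, es llevar a cabo esa sustitución de forma correcta —en particular verificar las condiciones de extremo $\widetilde{s_i}(p,q)(0)=H_0(p)=p$ y $\widetilde{s_i}(p,q)(1)=H_0(q)=q$— para que el algoritmo quede efectivamente explícito y no meramente existencial.
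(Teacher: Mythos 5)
Tu propuesta es correcta y sigue esencialmente el mismo camino que la demostración del artículo: aplicar la fórmula~(\ref{induz}) de la Proposición~\ref{homotopy-invariant}, con $g=r$, $f=i$ y la homotopía explícita $H$ de~(\ref{homotopia}), al algoritmo óptimo $\{s_i:U_i\to\text{P}S^1\}_{i=1}^{2}$ del Ejemplo~\ref{spheres-odd}, obteniendo los mismos abiertos $V_i=(r\times r)^{-1}(U_i)$ y los mismos algoritmos locales en tres tramos, con la optimalidad garantizada por $\text{TC}(X)=2$. La única diferencia es cosmética: tú sustituyes las expresiones de $H$, $r$ e $i$ para escribir las fórmulas completamente desplegadas, mientras que el artículo las deja en términos de $H_{3t}$, $r$ e $i$.
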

\begin{proof}
Sea $s=\{s_i:U_i\to \text{P}S^{1}\}_{i=1}^{2}$ el algoritmo óptimo en $S^{1}$, dado en el Ejemplo~\ref{spheres-odd}. Por la demostración de la Proposición~\ref{homotopy-invariant}, tenemos que $s$ induce un algoritmo óptimo $\widetilde{s}=\{\widetilde{s}_i:V_i\to \text{P} X\}_{i=1}^{2}$ en $X$. Donde \begin{eqnarray*}
V_1 &=& (r\times r)^{-1}(U_1)\\
&=& \{(z_1,z_2)\in X\times X:~\left(r(z_1),r(z_2)\right)\in U_1\}\\
&=& \{(z_1,z_2)\in X\times X:~ \dfrac{z_1}{\parallel z_1\parallel}\neq -\dfrac{z_2}{\parallel z_2\parallel}\}.
\end{eqnarray*} Similarmente, tenemos que \begin{eqnarray*}
V_2 &=& \{(z_1,z_2)\in X\times X:~ \dfrac{z_1}{\parallel z_1\parallel}\neq \dfrac{z_2}{\parallel z_2\parallel}\}.
\end{eqnarray*} Los algoritmos locales $\widetilde{s}_i$ están dados por:
\begin{eqnarray*}
\widetilde{s}_i(z_1,z_2)(t) &=&  \begin{cases}
H_{3t}(z_1),& \hbox{ si $0\leq t\leq 1/3$;}\\
i\left(s_i(r(z_1),r(z_2))(3t-1)\right),& \hbox{ si $1/3\leq t\leq 2/3$;}\\
H_{3-3t}(z_2),& \hbox{ si $2/3\leq t\leq 1$.}
\end{cases} 
\end{eqnarray*} 

\begin{figure}[!h]
\begin{minipage}[b]{0.5\textwidth}
\begin{center}
\includegraphics[scale=0.4]{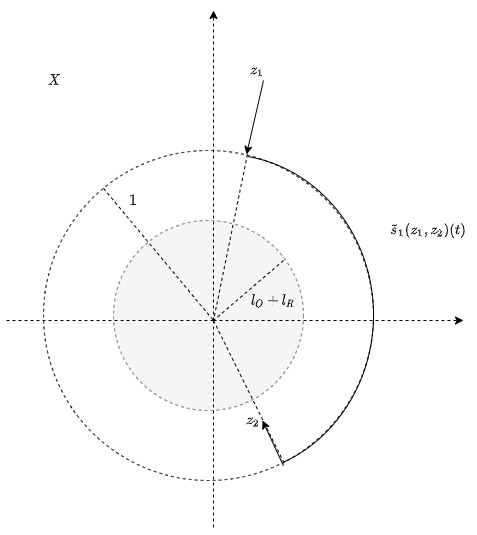}
\caption{El algoritmo local $\widetilde{s}_1$.}
\label{fig-1}
\end{center}
\end{minipage} \hfill \begin{minipage}[b]{0.5\textwidth}
\begin{center}
\includegraphics[scale=0.4]{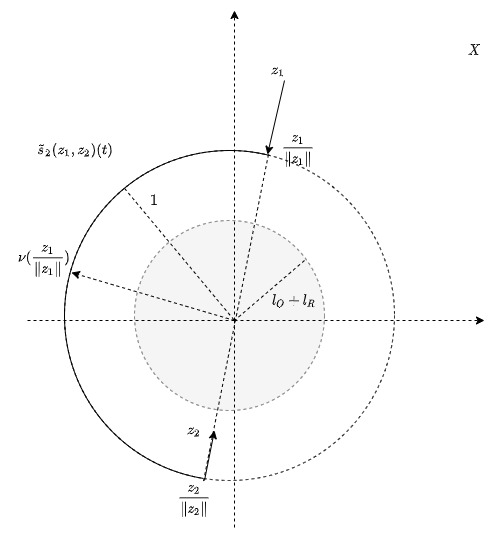}
\caption{El algoritmo local $\widetilde{s}_2$.}
\end{center}
\end{minipage}
\end{figure}

\end{proof}

Finalmente, en la siguiente Observación, vamos usar el algoritmo $\widetilde{s}$ para planificar el movimiento de nuestro robot, sin colisionar con el obstáculo, desde una posición inicial hasta una posición deseada. Así, $\widetilde{s}$ torna a nuestro robot en un robot autónomo capaz de navegar en el plano sin colisionar con el obstáculo. 

\begin{remark}
\rm{Note que el camino  $\widetilde{s}_1(z_1,z_2)(t)$ dado en la Figura~\ref{fig-1} induce un movimiento continuo en nuestro sistema mecánico como muestra la Figura~\ref{moc-cont}.} 

\begin{figure}[!h]
 \caption{Movimiento continuo del robot sin colisionar con el obstáculo desde la posición inicial $p_R=z_1$ hacia la posición final $p_R=z_2$ usando la ruta $\widetilde{s}_1(z_1,z_2)(t)$.}
 \label{moc-cont}
\centering
 \includegraphics[scale=0.5]{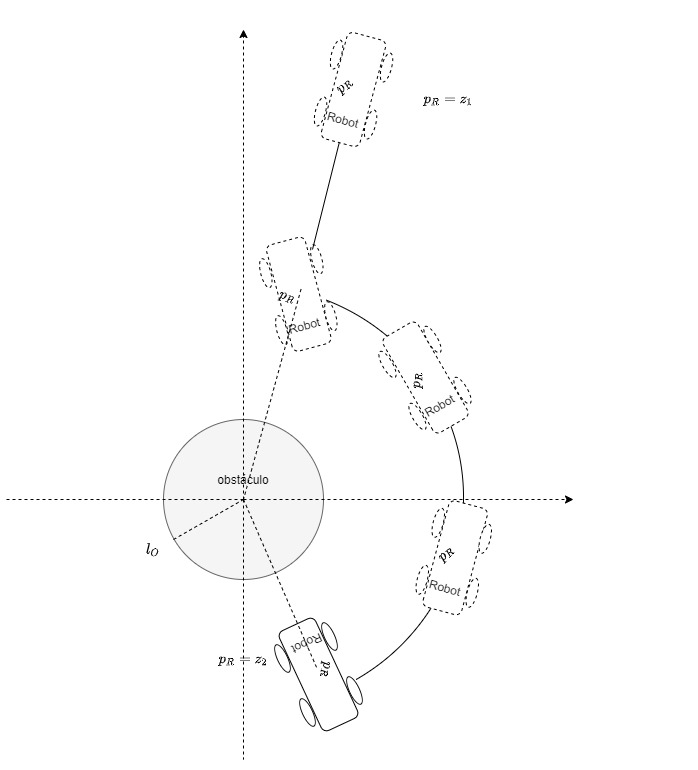}
\end{figure}
\end{remark}

\section{Conclusión}
La teoría de complejidad topológica es un enfoque topológico desarrollado por Farber en el 2003 para solucionar el problema de planificación de movimiento de robots. Hemos dado una introducción básica a esta teoría y para resaltar su importancia hemos resuelto el problema de planificación de movimiento de un sistema mecánico que consiste de un robot móvil que navega en el plano sin colisionar con un obstáculo. Así, este trabajo espera ser una introducción básica a la teoría de complejidad topológica para una mayor cantidad de lectores.

\end{document}